\theoremstyle{plain}
  \newtheorem{theorem}{Theorem}
  \newtheorem{lemma}{Lemma}
\theoremstyle{definition}
  \newtheorem{definition}[subsection]{Definition}
  \newtheorem{remark}[subsection]{Remark}
\newcommand{\T}{\mathcal{T}}
\newcommand{\F}{\mathcal{F}}
\newcommand{\R}{\mathbb{R}}
\newcommand{\N}{\mathbb{N}}
\begin{document}
\title[]{Free Boundary Minimal Surfaces in the Unit Ball With Low Cohomogeneity}
\author{Brian Freidin, Mamikon Gulian and Peter McGrath}
\date{}
\maketitle
\begin{abstract}
We study free boundary minimal surfaces in the unit ball of low cohomogeneity.  For each pair of positive integers $(m,n)$ such that $m, n >1$ and $m+n\geq 8$, we construct a free boundary minimal surface $\Sigma_{m, n} \subset B^{m+n}$(1) invariant under $O(m)\times O(n)$.  When $m+n<8$, an instability of the resulting equation allows us to find an infinite family $\{\Sigma_{m,n, k}\}_{k\in \N}$ of such surfaces.  In particular, $\{\Sigma_{2, 2, k}\}_{k\in \N}$ is a family of solid tori which converges to the cone over the Clifford Torus as $k$ goes to infinity.  These examples indicate that a smooth compactness theorem for Free Boundary Minimal Surfaces due to Fraser and Li does not generally extend to higher dimensions. 
% the smooth compactness theorem for Free Boundary Minimal Surfaces, due to Fraser and Li, does not, in the case of the unit ball, extend to higher dimensions. 

For each $n\geq 3$, we prove there is a unique nonplanar $SO(n)$-invariant free boundary minimal surface (a ``catenoid'') $\Sigma_n \subset B^n(1)$.  These surfaces generalize the ``critical catenoid'' in $B^3(1)$ studied by Fraser and Schoen.

\end{abstract}

\section{Introduction}
There has been recent interest in studying free boundary minimal surfaces in the unit ball.  Fraser and Schoen proved (Theorem 5.4, \cite{FS1}) that any free boundary minimal surface $\Sigma^2 \subset B^3(1)$ has area at least $\pi$. Brendle \cite{Brendle} extended this result to free boundary $\Sigma^k \subset B^n(1)$, and Freidin and McGrath \cite{FM} recently proved the analogous result for geodesic balls in Hyperbolic space $H^n$.  Along slightly different lines, Nitsche \cite{Nitsche} proved that the only free boundary minimal disks in $B^3(1)$ are equatorial disks; Souam \cite{Souam} extended this result to hold for balls in 3-dimensional space forms, and Fraser and Schoen further showed analogous rigidity holds for 2-disks of higher codimension.

Despite these results, there are few explicitly known examples of free boundary minimal surfaces in the unit ball.  As a consequence of their work on the Steklov Eigenvalue Problem, Fraser and Schoen have exhibited a family of free boundary minimal surfaces in  $B^3(1)$ with genus $0$ and any number of boundary components (Theorem 1.6, \cite{FS2}); such surfaces have recently been constructed using gluing methods \cite{Pacard 2} when the number of boundary components is large.  Little is known about the existence of free boundary minimal surfaces of higher genus.  In higher dimensions, the landscape is even more sparse, and to the authors' knowledge, no nontrivial free boundary surfaces in $B^n(1)$, $n>3$ have been constructed to date.  One purpose of this paper is to construct many new such surfaces.  These examples are invariant under groups of cohomogeneity one or two (to use the terminology of \cite{HL}). Imposing an ansatz of such symmetry reduces the minimal surface equation and associated free boundary condition to a more easily analyzed nonlinear second order ODE and associated boundary condition on an appropriate orbit space.  Hsiang developed these methods, which he called ``Equivariant Differential Geometry'' to carry out various constructions (cf. \cite{HL}, \cite{H1}, \cite{H2}, \cite{H3}).  In particular, Hsiang proved the existence of non-equatorial embedded minimal hyperspheres in $S^n$ for several $n>3$ (settling the so-called ``Spherical Bernstein Conjecture''), and the existence of infinitely many noncongruent, closed, embedded minimal surfaces in $S^n$ for $n\geq 3$.  Our main construction is:
\begin{theorem}
\label{thm: construct} Suppose $(m, n) \in \N^2$ and $m, n >1$.
\begin{enumerate}
\item If $m+n<8$, there exists an infinite family $\{ \Sigma_{m,n, k}\}_{k\in \N}$ of mutually noncongruent $O(m)\times O(n)$-invariant free boundary minimal hypersurfaces in $B^{m+n}(1)$, each of which is homeomorphic to $B^{m}\times S^{n-1}$.  As $k\rightarrow \infty$, the surfaces $\Sigma_{m, n, k}$ converge in $C^0(\R^{m+n})\bigcap C^\infty(\R^{m+n}\setminus\{0\})$ to the minimal cone $\mathcal{C}_{m,n}$ over $S^{m-1}\left(\sqrt{\frac{m-1}{m+n-2}} \right)\times S^{n-1}\left( \sqrt{\frac{n-1}{m+n-2}}\right) \subset S^{m+n-1}(1)$.  
\item If $m+n\geq 8$, there exists an $O(m)\times O(n)$-invariant free boundary hypersurface $\Sigma_{m, n}\subset B^{m+n}(1)$ homeomorphic to $S^{m-1}\times S^{n-1}\times [0,1]$.
\end{enumerate}
\end{theorem}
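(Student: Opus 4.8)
The plan is to pass to the orbit space of the $O(m)\times O(n)$-action and reduce the entire problem to a second order ODE -- a geodesic equation for a singular conformal metric on a planar region -- and then to analyze the resulting phase portrait.

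\emph{Reduction.} Writing $\R^{m+n}=\R^m\times\R^n$ and $(x,y)=(|u|,|v|)$, the orbits are the products $S^{m-1}(x)\times S^{n-1}(y)$, parametrized by the closed quarter-disk $\overline Q=\{x,y\ge 0,\ x^2+y^2\le 1\}$, and an invariant hypersurface $\Sigma\subset B^{m+n}(1)$ is swept out by a curve $\gamma\subset\overline Q$ whose $(m+n-1)$-volume is a constant multiple of $\int_\gamma x^{m-1}y^{n-1}\,ds$. Thus $\Sigma$ is minimal iff $\gamma$ is a geodesic of $\tilde g:=(x^{m-1}y^{n-1})^2(dx^2+dy^2)$, equivalently iff its Euclidean geodesic curvature satisfies $\kappa_\gamma=(m-1)\nu_1/x+(n-1)\nu_2/y$ for a unit normal $\nu=(\nu_1,\nu_2)$. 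Since $\tilde g$ is conformal to the flat metric, the free boundary condition is equivalent to $\gamma$ meeting the arc $\{x^2+y^2=1\}$ orthogonally (i.e.\ radially), and $\Sigma$ is smooth across $\{u=0\}$ (resp.\ $\{v=0\}$) exactly when $\gamma$ meets $\{x=0\}$ (resp.\ $\{y=0\}$) orthogonally, acquiring there a $B^m\times S^{n-1}$ (resp.\ $S^{m-1}\times B^n$) cap. So part (1) asks for a curve from $\{x=0\}$ to $\{x^2+y^2=1\}$, perpendicular to each and avoiding $\{y=0\}$, and part (2) for a curve with both endpoints on $\{x^2+y^2=1\}$, perpendicular there and avoiding both axes.

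\emph{The cone and the dichotomy.} The ray $\ell_{m,n}=\R_{\ge 0}\cdot(a_0,b_0)$ with $a_0^2=\frac{m-1}{m+n-2}$, $b_0^2=\frac{n-1}{m+n-2}$ solves the equation above (both sides vanish on it) and corresponds to $\mathcal{C}_{m,n}$. Using the scaling symmetry $(x,y)\mapsto(\lambda x,\lambda y)$ of the equation, one projects the geodesic flow to a flow on the cylinder $(0,\pi/2)\times(\R/2\pi\Z)$ in coordinates $(\theta,\psi)$, with $\theta$ the polar angle and $\psi$ the angle from $\gamma$ to the radial direction; $\ell_{m,n}$ becomes the fixed point $(\theta_0,0)$. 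Linearizing there -- equivalently, computing the Jacobi operator of $\mathcal{C}_{m,n}$ on its constant (i.e.\ $O(m)\times O(n)$-invariant) mode -- normal graphs over $\ell_{m,n}$ in the variable $\tau=\log r$ satisfy $\ddot\zeta+(m+n-3)\dot\zeta+(m+n-2)\zeta=0$, whose discriminant $(m+n-3)^2-4(m+n-2)=(m+n)^2-10(m+n)+17$ is negative precisely for $m+n\in\{4,5,6,7\}$ and positive for $m+n\ge 8$. Hence $(\theta_0,0)$ is a stable focus when $m+n<8$ and a stable node when $m+n\ge 8$: near $\mathcal{C}_{m,n}$, invariant minimal hypersurfaces oscillate across the cone with infinitely many windings when $m+n\le 7$, and limit onto it monotonically (as $r\to\infty$) when $m+n\ge 8$. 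This is the mechanism behind the two cases.

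\emph{Construction.} Two barriers control the global dynamics: near $\{y=0\}$ the term $(n-1)\nu_2/y$ blows up and pushes $\gamma$ off that axis (and similarly near $\{x=0\}$), while $\ell_{m,n}$, being itself a solution, cannot be crossed tangentially. An ODE argument ruling out periodic orbits of the reduced flow then forces every interior trajectory either to run into an axis or to converge to $(\theta_0,0)$. For part (1), let $\gamma$ be the (homothety class of) geodesic issuing orthogonally from $(0,1)\in\{x=0\}$, i.e.\ the smooth-cap solution. The barriers and the nonexistence of closed invariant minimal hypersurfaces in $\R^{m+n}$ keep it in the open quadrant and force it to run to $r=\infty$; since $m+n\le 7$ it spirals into $\ell_{m,n}$, becoming radial at a sequence of radii $R_1<R_2<\cdots\to\infty$. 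For each $k$, the rescaled curve $R_k^{-1}\gamma$ runs from $(0,R_k^{-1})\in\{x=0\}$ perpendicularly to a point of $\{x^2+y^2=1\}$ which it meets orthogonally, staying in the open quadrant in between, and so generates a free boundary minimal $\Sigma_{m,n,k}\cong B^m\times S^{n-1}$; distinct $k$ give distinct winding numbers and hence mutually non-congruent surfaces. As $k\to\infty$ the cap point tends to the origin, while away from the origin $R_k^{-1}\gamma\to\ell_{m,n}$ smoothly (by scale invariance together with the asymptotics of $\gamma$), giving the claimed $C^0(\R^{m+n})\cap C^\infty(\R^{m+n}\setminus\{0\})$ convergence $\Sigma_{m,n,k}\to\mathcal{C}_{m,n}$. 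For part (2), $m+n\ge 8$, shoot geodesics $\gamma_\alpha$ radially inward from $(\cos\alpha,\sin\alpha)$, $\alpha\in(0,\pi/2)$; the barriers force each to curve back and re-meet $\{x^2+y^2=1\}$, and we let $\Phi(\alpha)$ record the signed failure of orthogonality at the return point. For $\alpha$ near $0$ or near $\pi/2$ a comparison argument near the corresponding corner pins down the sign of $\Phi$, while for $\alpha$ near $\theta_0$ the curve hugs $\ell_{m,n}$ deep into the ball and -- since $m+n\ge 8$ precludes oscillation -- emerges on a definite side of the cone, pinning down the opposite sign; by the intermediate value theorem some $\alpha_*$ has $\Phi(\alpha_*)=0$, and rescaling so the return radius equals $1$ produces the desired curve, hence a free boundary minimal $\Sigma_{m,n}\cong S^{m-1}\times S^{n-1}\times[0,1]$.

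\emph{Main obstacle.} The reduction and the linearization are standard; the real work is the global ODE analysis: showing the relevant shooting curves stay in the open quadrant, establishing their asymptotics onto $\ell_{m,n}$ -- in particular ruling out limit cycles, so that for $m+n\le 7$ the number of windings (hence of surfaces) is genuinely infinite -- and, for part (2), controlling the return map $\Phi$ near the axes and near the cone precisely enough to close the intermediate value argument.
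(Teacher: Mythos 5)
Your strategy is essentially the paper's: pass to the orbit space with the weighted length functional, projectivize by the scaling symmetry to get a planar flow in the angular variables, and exploit the focal/nodal dichotomy of its singular point at $m+n=8$ --- spiraling plus rescaling for part (1), a one-parameter shooting family plus the intermediate value theorem for part (2). Two divergences are worth recording. First, you derive the dichotomy from the explicit linearization $\ddot\zeta+(m+n-3)\dot\zeta+(m+n-2)\zeta=0$, whose discriminant computation is correct; the paper instead asserts a ``straightforward singularity analysis'' and, more importantly, imports the global existence and asymptotics of the spiraling trajectory (emanating from the axis and converging to the singular point, asymptotic to $\ell_{m,n}$) from Alencar and Alencar et al. Your plan to re-derive these global facts from barriers and the nonexistence of limit cycles is exactly what you flag as ``the real work,'' and it is not carried out. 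Second, your part (2) shooting family differs from the paper's: you vary the foot point $(\cos\alpha,\sin\alpha)$ on the sphere with radial initial velocity and apply the IVT to the orthogonality defect at the return point, whereas the paper fixes a point on the cone ray at radius $R$, varies the initial direction $\theta(0)=\alpha-\epsilon$, locates the two radial times $t_\pm(\epsilon)$, and applies the IVT to $|\gamma_\epsilon(t_-)|-|\gamma_\epsilon(t_+)|$, rescaling afterwards. Both schemes degenerate as the parameter approaches the cone direction; the paper's has the advantage that radiality at both endpoints is built in, so only the two radii need to be matched.

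There is one concrete gap. In part (1) you check that $R_k^{-1}\gamma$ leaves the axis and meets the unit circle orthogonally and stays in the open quadrant in between, but you never check that it stays \emph{inside} the unit disk between those endpoints. This is not automatic for a spiraling curve: a priori $|\gamma|$ could exceed $|\gamma(t_k)|$ before time $t_k$ and return, in which case the portion of the surface lying in $B^{m+n}(1)$ would acquire extra boundary components on the sphere that do not meet it orthogonally, and $\Sigma_{m,n,k}$ would be neither a free boundary surface nor of the claimed topology (nor embedded). The paper closes this with a maximum-principle argument in the $(\varphi,\theta)$ plane: $\frac{d}{dt}|\gamma|=\cos(\theta-\varphi)>0$ exactly on the strip $|\theta-\varphi|<\pi/2$, and a direct computation with the reduced system shows this strip is forward invariant (on $\theta-\varphi=\pm\pi/2$ the flow strictly decreases $|\theta-\varphi|$), so $|\gamma(t)|$ is strictly increasing and $t_k$ is the unique time at which the rescaled curve meets the unit circle. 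The same containment issue arises in your part (2) between the entry and return points, where it follows from the same computation (the radius decreases up to the unique radial minimum and increases thereafter). Your proposal needs to identify and prove this monotonicity; everything else is a matter of executing the global phase-plane analysis you have correctly outlined.
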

Probably the most interesting case of Theorem \ref{thm: construct} is when $m=n=2$ where $\Sigma_{2, 2, k}\subset B^4(1)$ is a family of free boundary solid tori.  As $k\rightarrow \infty$, $\Sigma_{2, 2, k}$ converges to the cone over the Clifford Torus.  A similar phenomena was observed by Hsiang, who constructed a family of embedded $O(2)\times O(2)$-invariant minimal surfaces in $S^4$, each homeomorphic to $S^3$ (Theorems 1 and 2, \cite{H1}).  In particular, the examples from our Theorem \ref{thm: construct}, part (1) illustrate a lack of smooth compactness for free boundary minimal surfaces in $B^n(1)$ for $n>3$, in marked contrast to the compactness theorem \cite{Fraser-Li} proved by Fraser and Li in dimension three.

The proof of Theorem \ref{thm: construct} requires an understanding of the space of complete, $O(m)\times O(n)$-invariant minimal surfaces in $\R^{m+n}$, which has been studied by several authors.  In the early 1980s, Hsiang classified $O(m)\times O(n)$-invariant hypersurfaces in $\R^{m+n}$ with constant mean curvature \cite{H2} and there (remark (ii) in \cite{H2} below the proof of proposition 2') refers to a forthcoming paper describing the minimal case.  However, to the authors' knowledge, this paper never appeared.  Alencar \cite{Alencar1} considered the $m=n$ case and Ilmanen (\cite{Ilmanen}, p.44) later described properties of the general case.  A detailed analysis of the general case was carried out by Alencar et. al. in \cite{Alencar2}.  In section \ref{mn}, we review relevant properties of such minimal surfaces and prove Theorem \ref{thm: construct} by finding surfaces which ``fit'' inside $B^{m+n}(1)$ to satisfy the free boundary condition.  

The reason for a dichotomy between the cases $m+n< 8$ and $m+n\geq 8$ is due to a stability property of the associated minimal surface equation.  When $m+n<8$, the zeros of a relevant vector field on a space of parameters have focal singularities which force solutions to exhibit an oscillatory behavior.  On the other hand, when $m+n\geq 8$, the zeros have nodal singularities and the associated solution curves do not oscillate.  Similar behavior is present in other settings; e.g., for radially symmetric harmonic maps  $u: B^n\rightarrow S^n$ (c.f. example 2.2 in \cite{SU}) and the constructions of Hsiang discussed above.  

Starting in section \ref{forces} we consider free boundary surfaces in $B^n(1)$ invariant under $O(n)$.  The ansatz of full rotational symmetry is more restrictive, and we prove
\begin{theorem}\label{thm: catenoid}
Modulo isometries, there is a unique free boundary $n$-catenoid $\Sigma_n \subset B^{n+1}(1)$.  
\end{theorem}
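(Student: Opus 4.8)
\emph{Proof proposal.} The plan is to push the free boundary problem down to the one‑dimensional orbit space and then resolve it by an explicit monotonicity analysis; the substantive point will be that any free boundary $n$-catenoid is symmetric about the hyperplane through the origin perpendicular to its axis. I would first record the reduction. Writing $\R^{n+1}=\R_x\times\R^n_y$ and, after an isometry, taking the axis to be $\R_x$ (invariance forces its axis to pass through $0$), an $SO(n)$-invariant minimal hypersurface is a surface of revolution $\Sigma=\{|y|=f(x)\}$ with $ff''=(n-1)(1+f'^2)$; here $f''>0$, so $f$ is convex with a unique neck, and there is a first integral $1+f'^2=Cf^{2(n-1)}$. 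Modulo dilations and translations along the axis there is exactly one nonplanar solution, the standard $n$-catenoid, whose half-profile runs monotonically from the neck out to $x=+\infty$ when $n=2$ and to a finite $x$-value with $f\to\infty$ when $n\ge 3$. The free boundary condition for $\Sigma\subset B^{n+1}(1)$ unwinds, at each boundary point $(x_0,y_0)$, to $x_0f'(x_0)=f(x_0)$ — the profile's tangent line passes through the origin — together with $x_0^2+f(x_0)^2=1$.

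Next I would feed these two relations into the first integral. Evaluating $1+f'^2=Cf^{2(n-1)}$ at a boundary point and using $x_0f'(x_0)=f(x_0)$ and $x_0^2+f(x_0)^2=1$ forces $C=h(r)^{-1}$, where $h(r):=r^{2(n-1)}(1-r^2)$ and $r=f(x_0)\in(0,1)$ is the radius of the corresponding boundary $(n-1)$-sphere (and it pins the profile's slope there to $f(x_0)/x_0$). Since $C$ is constant along $\Sigma$, the two boundary radii $r_1,r_2$ obey $h(r_1)=h(r_2)$; as $h$ increases on $(0,\sqrt{(n-1)/n}\,)$ and decreases afterwards, either $r_1=r_2$ or $r_1<\sqrt{(n-1)/n}<r_2$. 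Furthermore $\Sigma$ is a translate of the dilated standard catenoid with neck radius $\rho=h(r_1)^{1/(2(n-1))}$; one checks the neck lies strictly between the two boundary spheres, so writing $x_r:=\rho\int_1^{\sigma}(\tau^{2(n-1)}-1)^{-1/2}\,d\tau$ with $\sigma:=(1-r^2)^{-1/(2(n-1))}$ for the axial distance from the neck to the sphere of radius $r$, each boundary sphere — being on $S^n$ — forces the neck to a definite axial position, and the two forced positions agree precisely when $M(r_1)+M(r_2)=0$, where $M(r):=x_r-\sqrt{1-r^2}$. Conversely every pair $(r_1,r_2)$ with $h(r_1)=h(r_2)$ and $M(r_1)+M(r_2)=0$ produces a free boundary $n$-catenoid (orthogonality at the boundary is automatic once $\rho=h(r)^{1/(2(n-1))}$), and $r_1=r_2$ corresponds exactly to the surfaces symmetric about $\{x=0\}$.

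The symmetric case is now immediate: reparametrizing by $\sigma\in(1,\infty)$, $\sigma M=\sqrt{1-\sigma^{-2(n-1)}}\int_1^{\sigma}(\tau^{2(n-1)}-1)^{-1/2}\,d\tau-\sigma^{2-n}$ is strictly increasing — a product of positive increasing functions minus a nonincreasing one — and runs from $-1$ at $\sigma=1$ to a positive limit as $\sigma\to\infty$, so $M$ has a unique zero $r^{*}$, which one verifies satisfies $r^{*}>\sqrt{(n-1)/n}$. This yields exactly one symmetric free boundary $n$-catenoid: the suitably rescaled piece of the standard catenoid cut off by the two cross-sections where its profile's tangent passes through the origin. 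It remains to rule out $r_1\ne r_2$. On the locus $h(r_1)=h(r_2)$ with $r_1<\sqrt{(n-1)/n}<r_2$ one has $r_1<r^{*}$, hence $M(r_1)<0$, and one must show $M(r_1)+M(b(r_1))<0$, where $r_2=b(r_1)$ denotes the $h$-partner of $r_1$; this is clear once $b(r_1)<r^{*}$ (both terms negative), and the genuine case is $r_1<b(r^{*})$, where the positive quantity $M(b(r_1))$ must be dominated by $|M(r_1)|$ along the curve $\{h(r_1)=h(r_2)\}$.

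I expect this exclusion of asymmetric solutions — equivalently, the symmetry of every free boundary $n$-catenoid about $\{x=0\}$ — to be the main obstacle. I would attack it either by the direct monotonicity estimate just described (the delicate inputs being the behavior of $M$ as $r\to 0$ and as $r\to 1$, where for $n\ge 3$ the profile becomes vertical), or by an Alexandrov-type moving-planes argument across the hyperplanes $\{x=t\}$, using the interior and free boundary maximum principles for minimal hypersurfaces, which would furnish the symmetry a priori and reduce everything to the single-variable statement above.
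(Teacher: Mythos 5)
Your reduction to the orbit space and the reformulation in terms of the two boundary radii are sound and genuinely different from the paper's route: the identities $C=h(r)^{-1}$ with $h(r)=r^{2n-2}(1-r^2)$, hence $h(r_1)=h(r_2)$, and the neck--position constraint $M(r_1)+M(r_2)=0$ are correct, and your monotonicity argument for $\sigma M$ cleanly settles existence and uniqueness in the symmetric case $r_1=r_2$. (Your claim $r^{*}>\sqrt{(n-1)/n}$ is, in these coordinates, exactly the paper's Lemma \ref{first derivative positive estimate}, so it too needs an argument.) But the proof is not complete: excluding asymmetric solutions, i.e.\ pairs $r_1<\sqrt{(n-1)/n}<r_2$ with $h(r_1)=h(r_2)$ and $M(r_1)+M(r_2)=0$, is the entire content of the uniqueness statement, and you explicitly leave it as ``the main obstacle'' with two unexecuted strategies. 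The first --- showing $M(b(r_1))<|M(r_1)|$ along the level curve of $h$ --- is precisely the hard quantitative inequality and is nowhere established. The second, an Alexandrov moving-planes argument across $\{x=t\}$, is doubtful as stated: reflection across an off-center hyperplane does not preserve the unit ball, so the free boundary condition is not preserved and the standard reflection argument does not apply. For comparison, the paper closes this step by parametrizing by the axial offset $c$ of the neck, setting $f_i(c)=|\gamma(z_i(c))|^2$ at the two tangency points (Lemma \ref{general technical lemma}), and proving $f_1(c)>f_2(c)$ for $c>0$ via $f_1'(0)>0>f_2'(0)$, $f_1''(0)=f_2''(0)$, and $f_1^{(3)}\ge f_2^{(3)}$; that derivative estimate is the analogue, in its coordinates, of the inequality your proposal is missing.

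A second, smaller gap: you assert parenthetically that ``invariance forces'' the axis to pass through the origin. Invariance of the catenoid about its own axis forces nothing about the center of the ball; the paper devotes Lemma \ref{lemma: coaxial} (a torque-balancing argument) to this point. Your step can be repaired cheaply --- each boundary component is a round $(n-1)$-sphere centered on the axis and contained in $S^{n}$, and any $(n-1)$-sphere contained in $S^{n}$ is centered at the foot of the perpendicular from the origin to its affine hyperplane, which forces the axis through $0$ --- but as written it is an unproved assertion, not a proof.
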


When $n=2$, Theorem \ref{thm: catenoid} appears to be a folklore result - in particular, it is stated as a fact without reference in \cite{Nitsche}, page 2.  The authors have, however, been unable to find a proof in the literature; it appears here in the more general context of Theorem \ref{thm: catenoid}.  See also remark \ref{rmk: crit cat} for a more geometric proof in the 2-dimensional case.

After recalling well-known properties of the $n$-catenoid and notions of torque balancing in Sections \ref{forces} and \ref{cat properties}, we prove Theorem \ref{thm: catenoid} in Section \ref{main proof}.

The third author would like to thank his thesis advisor, Nikolaos Kapouleas, for suggesting the problem leading to Theorems 1 and 2 and for suggesting a torque balancing argument to prove Lemma \ref{lemma: coaxial}, which superseded his original, less geometric proof.

%%%%%%%%%%%%%%%%%%%%%%%%%%%%%%%%%%%%%%%%%%%%%%%%%%%%%
%%  NOTATION AND CONVENTIONS
%%%%%%%%%%%%%%%%%%%%%%%%%%%%%%%%%%%%%%%%%%%%%%%%%%%%%
\section{Notation and Conventions}
\label{section: notation}
\begin{definition}
\label{def: fbc}
We say a smooth submanifold $\Sigma \subset B^k(1)$ is a \emph{free boundary minimal surface in $B^k(1)$} if $\Sigma$ is minimal, $\partial \Sigma \subset \partial B^k(1)$, and $\Sigma$ intersects $\partial B^k(1)$ orthogonally along $\partial \Sigma$.
\end{definition}
If $\eta$ is the outward pointing unit conormal to $\Sigma$ along $\partial \Sigma$ and $X$ is the position vector field, the free boundary condition implies that $\langle X, \eta\rangle = 1$.

Let $(\R^k, \delta_{ij})$ be Euclidean space and let $O(k)$ be the group of isometries of $\R^k$ preserving the origin.  Let $G$ be a Lie subgroup of $O(k)$ and let $\Pi: \R^k \rightarrow \R^k/G$ be the natural projection.  We say a submanifold $\Sigma \subset \R^k$ is $G$-invariant if for each $g\in G$, $g p\in \Sigma$ for all $p\in \Sigma$.  As in Theorem 2 in \cite{HL}, there exists an \emph{orbital metric} $g_{\Pi}$ on $\R^k /G$ such that $G$-invariant minimal surfaces in $(\R^k, \delta_{ij})$ correspond to minimal surfaces in $(\R^k/G, g_{\Pi})$ under the projection $\Pi$.  
In this paper, we consider two cases:\newline
\textbf{Case 1:}\newline $k = m+n$ for $m, n>1$ and $G = O(m)\times O(n)$ acts on $\R^k$ by the product action.  
We identify the orbit space $\R^k/G$ with the closed first quadrant
\begin{align*}
Q = \{ (x, y) \in \R^2: x\geq 0, y\geq 0\}.
\end{align*}
Using the quotient map $\Pi: \R^m \times \R^n \rightarrow Q$ defined by $\Pi(X, Y) = (|X|, |Y|)$, the inverse image of $(x,y)\in Q$ under $\Pi$ is a product manifold $S^m(x)\times S^{n}(y)$.  The orbital metric is (up to a constant multiplicative factor)
\begin{align}
\label{(m,n) metric}
g_{m, n}  = x^{m-1}y^{n-1} ( dx^2+ dy^2).
\end{align}
\textbf{Case 2:}\newline $k=n+1$, $G=O(n)$, and $G$ acts in the standard way on the last $n$ coordinates of $\R^{k}$.  
We identify the orbit space $\R^{k}/G$ with the closed half space 
\begin{align*}
H = \{ (x, y)\in \R^2: y\geq 0\}.
\end{align*}
Using the quotient map $\Pi: \R^k = \R\times \R^n \rightarrow H$ defined by $\Pi(X, Y) = (X, |Y|)$, the inverse image of $(x,y)\in H$ under $\Pi$ is a sphere $\{x\}\times S^n(y)$.  The orbital metric is (up to a constant multiplicative factor)
\begin{align}
\label{n metric}
g_{n} = x^{n-1}(dx^2+dy^2).
\end{align}

\begin{definition}
\label{def: fb curve}
We say an immersed curve $\gamma : I \rightarrow \R^k/G$ is a \emph{profile curve} if $\gamma$ is a geodesic with respect to the orbital metric.  For convenience, we nonetheless parametrize profile curves with respect to the standard Euclidean arc length on $\R^k/G$.  We say a profile curve $\gamma$ is a \emph{free boundary profile curve} if for all $t$ such that $|\gamma(t)|=1$, $\gamma(t) = \gamma'(t)$.
\end{definition}
Clearly, free boundary profile curves in $\R^k/G$ correspond to $G$-invariant free boundary minimal surfaces in $B^{k}(1)$.

%%%%%%%%%%%%%%%%%%%%%%%%%%%%%%%%%%%%%%%%%%%%%%%%%%%%%
%%% O(m)\times O(n) minimal surfaces
%%%%%%%%%%%%%%%%%%%%%%%%%%%%%%%%%%%%%%%%%%%%%%%%%%%%%

\section{$O(m)\times O(n)$-invariant Minimal Surfaces}
\label{mn}
The Euler-Lagrange equation for the arclength integral \eqref{(m,n) metric} is
\begin{align}
\label{eqn el}
-x''(t)y'(t)+y''(t)x'(t)+ \frac{(m-1)y'(t) y(t) - (n-1)x'(t) x(t)}{x(t)y(t)} = 0.
\end{align}
  
Let $\ell_{m, n}$ be the line in $Q$ given by the equation $y = \sqrt{\frac{n-1}{m-1}} x$.  It is straightforward that a parametrization of $\ell_{m,n}$ satisfies equation \eqref{mn}.  Under the inverse image of $\Pi$, $\ell_{m,n}$ corresponds to the cone $\mathcal{C}_{m,n}$ over the ``Clifford'' type minimal surface $S^{m-1}\left(\sqrt{\frac{m-1}{m+n-2}} \right)\times S^{n-1}\left( \sqrt{\frac{n-1}{m+n-2}}\right) \subset S^{m+n-1}(1)$.

Given an arc length parametrized profile curve $\gamma(t)$, we define a radial parameter $r(t) = |\gamma(t)|$ and angular parameters $\varphi(t), \theta(t)$ by requesting that
\begin{align}
\label{eqn angles}
\begin{split}
r(t)\cos\left(\varphi(t)\right) &= x(t) \quad \text{and}\quad r(t)\sin\left(\varphi(t)\right) = y(t),\\
\cos \left( \theta(t)\right) &= x'(t) \quad \text{and} \quad \sin \left( \theta(t)\right) = y'(t).
\end{split}
\end{align}
Clearly, $\varphi(t)$ and $\theta(t)$ are the respective angles, modulo $2\pi$, that $\gamma(t)$ and $\gamma'(t)$ make with the positive $x$-axis.  

Combining \eqref{eqn angles} with \eqref{eqn el} (and suppressing the variable $t$), we find
\begin{align}
\label{eqn: vector field}
\varphi' = \frac{\sin\left( \theta - \varphi\right)}{r},\quad \theta' = 2\frac{(n-1) \cos \theta \cos \varphi - (m-1)\sin\theta \sin \varphi}{r \sin\left(2\varphi\right)}.
\end{align}
Since the set of solution curves of \eqref{eqn el} is invariant under the scaling $\gamma(t) \mapsto c\gamma(t)$ for $c>0$ and the angular variables $\varphi(t), \theta(t)$ are similarly invariant under dilations, it follows that the qualitative behavior of the profile curves is determined by the trajectories of the vector field $V:(0, \pi/2)\times (-\pi, \pi)\rightarrow \R^2$ defined by
\begin{align}
\label{eqn: V}
\begin{split}
V(\varphi, \theta) &= \left( \sin\left(2 \varphi\right)\sin\left( \theta - \varphi\right), 2\left((n-1) \cos \theta \cos \varphi - (m-1)\sin\theta \sin \varphi \right)\right)\\
&:= (V^1, V^2).
\end{split}
\end{align}
Note: this sign convention is consistent with \cite{Alencar1} but has the opposite sign of the vector field $X$ in \cite{Alencar2}.  
On the domain 
\begin{align*}
\Omega = \left\{ (\varphi, \theta) \in (0, \frac{\pi}{2})\times (-\pi, \pi)\right\},
\end{align*}
$V^1$ vanishes precisely on the graphs of the functions
\begin{align*}
\Theta^1_1(\varphi) = \varphi \quad \text{and} \quad \Theta^1_2(\varphi) = \varphi-\pi
\end{align*}
and $V^2$ vanishes precisely on the graphs of the functions 
\begin{align*}
\Theta^2_1(\varphi) = \arctan\left( \frac{n-1}{m-1} \cot \varphi\right) \quad \text{and} \quad \Theta^2_2(\varphi) = \arctan\left( \frac{n-1}{m-1} \cot \varphi\right) - \pi.
\end{align*}
Hence, easily checked monotonicity of the functions $\Theta^1_1, \Theta^1_2, \Theta^2_1$ and $\Theta^2_2$ implies that $V$ has two singular points $p_1:=(\varphi_1, \Theta^1_1(\varphi_1)), p_2:=(\varphi_2, \Theta^2_2(\varphi_2)) \in \Omega$ for numbers $\varphi_1, \varphi_2$ satisfying 
\begin{align*}
\Theta^1_1(\varphi_1) &= \Theta^2_1(\varphi_1)\\
\Theta^1_2(\varphi_2) &= \Theta^2_2(\varphi_2).\\
\end{align*}
\begin{proof}[\textbf{Proof of Theorem \ref{thm: construct}}]  We first prove part (1).  Suppose that $m+n<8$.  
A straightforward singularity analysis reveals that $V$ has focal singularities at $p_1$ and $p_2$.  Further, there is an integral curve 
\[ \psi:(-\infty, \infty)\rightarrow \Omega, \quad \psi(s) = (\varphi(s), \theta(s))\]  of $V$ such that $\lim_{s\rightarrow - \infty}\psi(s) = (0, \pi/2)$ and $\lim_{s\rightarrow \infty} \psi(s) = p_1$ (Lemma 3.1 (i) in \cite{Alencar1} and Corollary 3.11 in \cite{Alencar2})\footnote{In \cite{Alencar2}, Corollary 3.11 (and related results) are stated for $m, n \geq 3$.  However, the proofs remain true when $m, n\geq 2$, as may be seen by examining the proof of Lemma 3.5 \cite{Alencar3}.}.  In particular, as $\psi$ spirals toward $p_1$, there is an increasing sequence $\{s_k\}_{k\in \N}$ such that $\varphi(s_k) = \theta(s_k)$.    Translating this information back to the system \eqref{eqn: vector field}, it follows there is a profile curve $\gamma: [0, \infty) \rightarrow Q$ and an increasing sequence of times $\{t_k\}_{k\in \N}$ such that $\gamma(0)$ intersects the $x$-axis orthogonally and $\varphi(t_k) = \theta(t_k)$.  Define rescaled profile curves 
\begin{align*}
\gamma_k(t): [0, \infty)\rightarrow Q \quad \text{by} \quad \gamma_k(t) = \frac{ \gamma(t)}{|\gamma(t_k)|}. 
\end{align*}
For each $k\in \N$, $\gamma_k(t)$ then satisfies $|\gamma_k(t_k)| = 1$ and $\gamma'_k(t_k) = \gamma_k(t_k)$. 

  It remains to show that $|\gamma_k(t)|<1$ for $t<t_k$ so $\gamma_k$ is in fact a free boundary curve.  For this, we will show 
 \begin{align*} 
 |\gamma(t)|^2 \text{ is monotonic increasing in } t
 \end{align*}
 by a maximum principle argument based on \eqref{eqn: vector field}.  In particular, this will also imply that $\gamma_k$ is embedded.  To carry out the argument, note that 
 \[ \frac{d}{dt}|\gamma(t)|> 0 \quad \text{when} \quad |\varphi(t)-\theta(t)|<\frac{\pi}{2}.\]   On the other hand, by inspection of \eqref{eqn: vector field} it follows that if $\theta(t) - \varphi(t) = \frac{\pi}{2}$ for some $t$, then $\theta'(t) - \varphi'(t)< 0$.  Likewise, if $\varphi(t) - \theta(t) = \frac{\pi}{2}$, then $\varphi'(t) - \theta'(t) > 0$.  Therefore, $|\gamma(t)|$ is monotonic increasing, and hence for each $k\in \N$, there is a unique $t$, namely $t_k$, such that $|\gamma_k(t_k)| = 1$.  Thus, $\gamma_k$ is a free boundary profile curve and this proves the existence of the family $\{ \Sigma_{m, n, k}\}_{k \in \N}$. 
 
 \begin{figure}[h]
\centering
\subfloat[$(m,n) = (4, 2)$]{\includegraphics[width=1.5in]{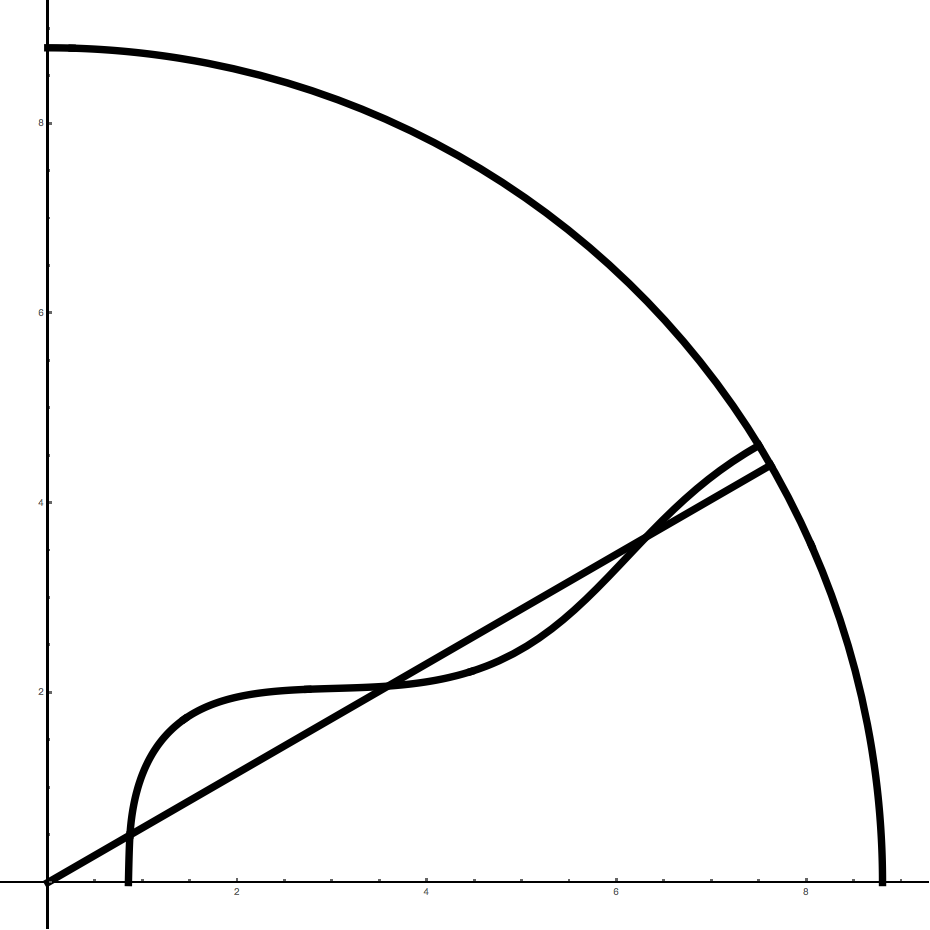}\label{fig:f1}}
\subfloat[$(m,n) = (9, 3)$]{\includegraphics[width=1.5in]{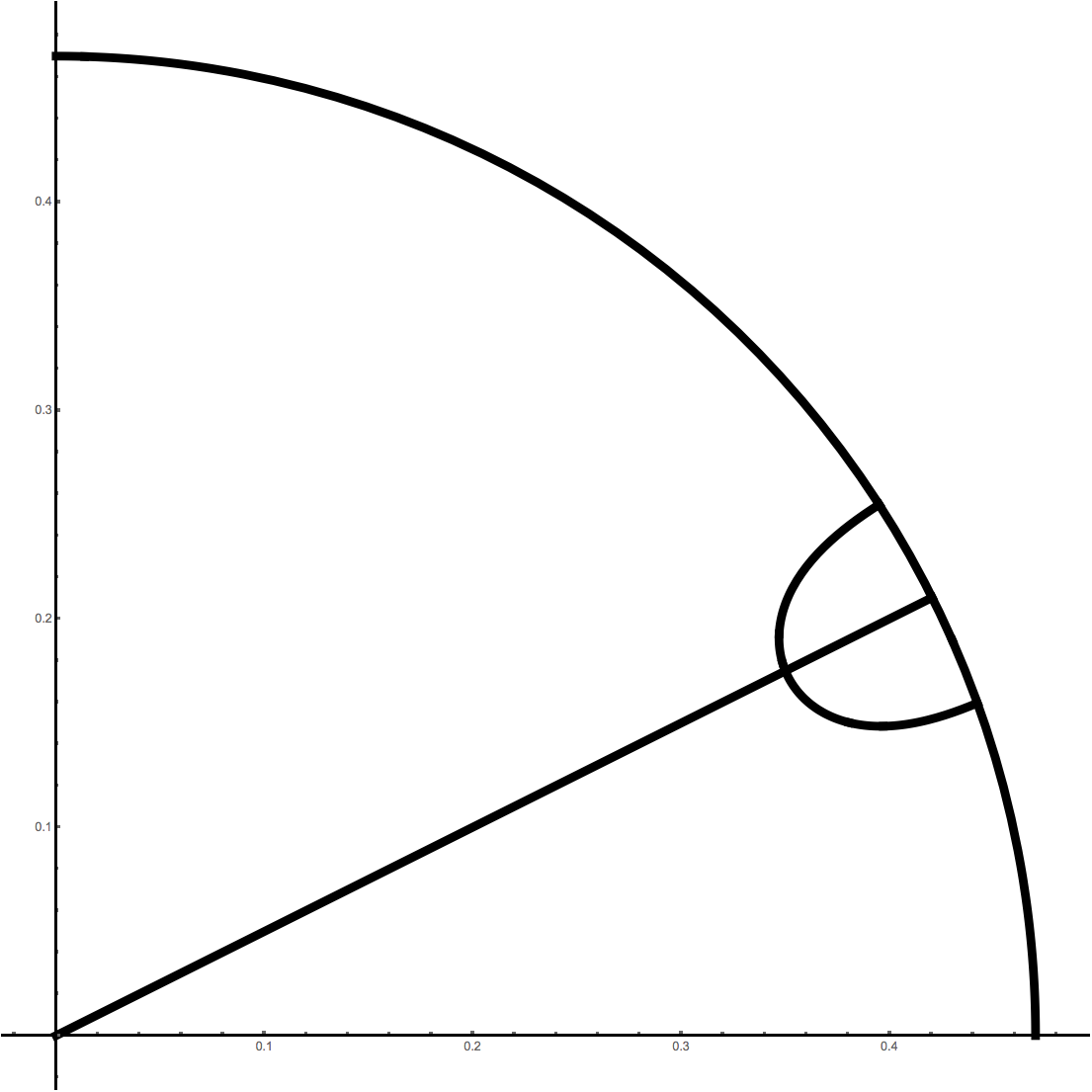}\label{fig:f2}}
\caption{Sketches (in Mathematica) of profile curves $\gamma$ and the lines $\ell_{m,n}$ representative of parts (1) and (2) of Theorem \ref{thm: construct}.  In \ref{fig:f1}, $(m,n) =(4, 2)$ and in figure \ref{fig:f2}, $(m, n) =(9, 3)$.}  
\end{figure}

 By Theorem 1.1 in \cite{Alencar1} and Theorem 1.1, part (3) in \cite{Alencar2}, the profile curve $\gamma$  is asymptotic to the line $\ell_{m,n}$.  Since clearly $\lim_{k\rightarrow \infty}|\gamma(t_k)|= \infty$, it follows that the scalings $\gamma_k$ converge in $C^0$ to $\ell_{m, n}$ on $Q$ and moreover converge in $C^\infty$ to $\ell_{m,n}$ on compact subsets of $Q\setminus \{0\}$. Since the inverse image under $\Pi$ of $\ell_{m,n}$ is the minimal cone $\mathcal{C}_{m,n}$, this completes the proof of part (1).  
 
 We now prove part (2).  Assume now that $m+n\geq 8$.  
Define $\alpha = \tan^{-1}\sqrt{\frac{n-1}{m-1}}$.  Fix $R\in (0, \infty)$, for example $R=\frac{1}{2}$.  For $\epsilon\in [0, \pi]$, let $\gamma_\epsilon$ be the profile curve with initial data
\begin{align}
\label{eqn: profile data}
|\gamma_\epsilon(0)|=R, \quad \varphi(0) = \alpha, \quad \text{and} \quad \theta(0) = \alpha - \epsilon.
\end{align}
As in the proof of part (1) above, $\theta'(t)$ vanishes precisely on the graphs of the functions $\Theta^2_1$ and $\Theta^2_2$.  In particular, by this and inspection of \eqref{eqn: vector field}, it follows that
\begin{align}
\label{eqn: flow monotonicity}
\begin{split}
\theta'&> 0 \quad \text{for} \quad \varphi\in (0, \pi/2), \quad \Theta^2_2(\varphi)< \theta< \Theta^2_1(\varphi),\\
\varphi'&< 0 \quad \text{for} \quad \varphi \in (0, \pi/2), \quad -\pi < \theta- \varphi <0.  
\end{split}
\end{align}
Hence, there are unique times $t_-(\epsilon)<0<t_+(\epsilon)$ such that
\begin{align*}
\varphi(t_-) - \pi = \theta(t_-) \quad \text{and} \quad \varphi(t_+) = \theta(t_+).
\end{align*}
Moreover, $\theta'(t)>0$ and $\varphi'(t)< 0$ for $t\in [t_-, t_+]$.  
Let $0<\delta< 1/2$.  It is clear that the points $\{\gamma_\epsilon(t_-): \epsilon \in (\pi - 1/2, \pi)\} $ foliate an open ray of the line 
\[ \{(\varphi, \theta): \varphi \in (\alpha, \alpha+\delta),  \theta = \varphi - \pi\}\] 
 when $\delta$ is sufficiently small.  Hence, the images of the curves 
 \[\{ \gamma_\epsilon(t): t\in [t_-(\epsilon), 0], \epsilon \in (0, \pi-1/2)\} \] 
 are contained in the complement of fixed compact sets containing the graphs of $\Theta^2_1$ and $\Theta^2_2$.  This implies that for such curves $\gamma_\epsilon$, $(n-1) \cos \theta \cos \varphi - (m-1)\sin\theta \sin \varphi$ is uniformly  bounded away from $0$ for $t$ near $t_-(\epsilon)$.  In particular, this is true as $\epsilon \searrow 0$.

By the monotonicity of $\varphi$ and $\theta$, it follows that $r(t_+(\epsilon))>R$ as $\epsilon\searrow 0$.  On the other hand, we claim that $r(t_-(\epsilon))\rightarrow 0$ as $\epsilon\searrow 0$.  By the monotonicity of $\varphi$ and $\theta$, there is a unique $t_0(\epsilon) \in (t_-(\epsilon), 0)$ such that $\varphi(t_0) - \theta(t_0) = \frac{\pi}{2}$.  In particular, $r'(t)<0$ for $t<t_0$, $r'(t)>0$ for $t>t_0$ and $r$ attains a global minimum at $t=t_0$. 

By smooth dependence on ODE solutions based on initial conditions, $\gamma_\epsilon$ converges to $\ell_{m,n}$ as $\epsilon\searrow 0$.  Therefore, there is a sequence $\epsilon_n \searrow 0$ such that $|\gamma_{\epsilon_n}(t_0)|<\frac{1}{n}$.  By this and the preceding, it follows from Equation \eqref{eqn: vector field} that $\lim_{n\rightarrow \infty} \theta'(t) = 0$ uniformly for $t\in (t_-(\epsilon_n), t_0(\epsilon_n))$.  Since $\gamma_{\epsilon_n}$ is unit speed parametrized, this implies that $\epsilon_n$ can be chosen such that $|\gamma_{\epsilon_n}(t_-)|< \frac{2}{n}$.  Therefore, $|\gamma_{\epsilon}(t_-)|<|\gamma_{\epsilon}(t_+)|$ as $\epsilon\searrow 0$.  On the other hand, by adapting the argument above as $\epsilon \nearrow \pi$, we find that $|\gamma_{\epsilon}(t_-)|>|\gamma_{\epsilon}(t_+)|$ as $\epsilon \nearrow  \pi$. By continuity, it follows that there is an $\bar{\epsilon} \in (0, \pi)$ such that $|\gamma_{\bar{\epsilon}}(t_-)| = |\gamma_{\bar\epsilon}(t_+)|$.  It follows that the rescaled curve
\begin{align*}
\gamma_{m,n}: [t_-, t_+]\rightarrow Q \quad \text{defined by} \quad \gamma_{m,n} =  \frac{ \gamma_{\bar\epsilon}(t)}{|\gamma_{\bar\epsilon}(t_-)|}
\end{align*}
is a free boundary profile curve, and the proof is complete.  
\end{proof}

\begin{remark}
When $m+n\geq 8$, the vector field $V$ has nodal singularities.  It is easy to check that in this case, the integral curve of $V$ defined in an analogous way to $\psi$ from the proof of Theorem \ref{thm: construct} does not intersect the line $\{\varphi = \theta\}$ (see also \cite{Alencar2}, Proposition 4.4, parts (1) and (3)). Therefore, there are no free boundary minimal surfaces in $B^{m+n}(1)$ of the type from Theorem \ref{thm: construct}, part (1) when $m+n\geq 8$.  When $m+n<8$, the oscillatory behavior of the trajectories of $V$ make the continuity argument in the proof of Theorem \ref{thm: construct}, part (2) break down.  
\end{remark}

%%%%%%%%%%%%%%%%%%%%%%%%%%%%%%%%%%%%%%
% SECTION 2: DEFINITIONS
%%%%%%%%%%%%%%%%%%%%%%%%%%%%%%%%%%%%%%
\section{Torques and Balancing}
\label{forces}
We recall some notions regarding fluxes and force balancing for orientable minimal surfaces $\Sigma^n \subset \R^{n+1}$ (cf. \cite{Osserman}, p. 31).  Let $\Sigma$ be such a surface, $\eta$ be the outward pointing unit conormal vector field on $\partial \Sigma$, and let $K$ be a Killing field on $\R^{n+1}$.  A consequence of the First Variation Formula is the \emph{Balancing Formula}: 
\begin{align}
\label{balancing}
\int_{\partial \Sigma} \!  K\cdot  \eta  = 0.
\end{align}

Suppose $\sigma \subset \partial \Sigma$ is a boundary component.  Recall the \emph{Flux} about $\sigma$, $\mathcal{F}(\sigma)$, is defined by 
\[ \mathcal{F}(\sigma) = \int_{\sigma}\! \eta\]
and depends only on the homology class $[\sigma]$ of $\sigma$.  

If $\Sigma \subset \R^3$, the \emph{Torque}  $\T(\sigma)$ about $\sigma$ is defined by
\[ \T(\sigma) = \int_{\sigma}\! X \times \eta\]
where $X$ is the position vector and $\times$ represents the cross product.  For a vector $v \in \R^3$ we denote $K_{v}$ the counterclockwise rotation about $v$ defined by
\[ K_{v}(X) = v\times X.\]
If $W$ is another base point, we similarly denote $K_{v, W}$ the counterclockwise rotation about $v$ based at $W$, defined by
\[ K_{v, W}(X) = v \times (X-W).\] 
Observe the Torque satisfies
\begin{align}
\label{torque def}
\T(\sigma)\cdot v = \int_{\sigma}\! (X\times \eta)\cdot v = \int_{\gamma} \!(v\times X)\cdot \eta = \int_{\sigma} \!K_{v} \cdot \eta
\end{align}
so $\T(\sigma)$ also depends only on the homology class $[\sigma]$ of $\sigma$.  Similarly we have the torque measured from a base point $W$ is defined by
\begin{align}
\label{resultant torque}
\T_W( \sigma) = \int_{\sigma} \!(X - W)\times \eta = \T(\sigma) - W\times \F(\sigma).
\end{align}

\section{Properties of the $n$-catenoid}\label{cat properties}
In this section we borrow some notation from \cite{Pacard 1}.   We define an \emph{$n$-catenoid} to be a complete, nonplanar minimal hypersurface of revolution in $\R^{n+1}$.  We shall only consider hypersurfaces of revolution about the $x^{n+1}$-axis.  Locally, such surfaces may be parametrized by
\[ (z,r(z)\omega ) \] 
where $\omega$ locally parametrizes $S^{n-1}$ and the profile function $r(z)$ satisfies the differential equation
\begin{equation}
\label{profile}
 \ddot{r} r -(n-1)(1+\dot{r}^2) = 0
 \end{equation}
 where $z$ is the usual coordinate on the $x^{n+1}$-axis and $\cdot$ denotes differentiation with respect to $z$. 

If we normalize so that $r(0) = 1$ and that $\dot{r}(0) = 0$, $r$ also satisfies
\begin{equation}
\label{fop}
1+\dot{r}^2 = r^{2n-2}.
\end{equation}
It is worth remarking that any solution of \eqref{profile} may be obtained by dilating and translating a solution of \eqref{fop}.  By combining Equations \eqref{profile} and \eqref{fop}, we obtain
\begin{equation}
\label{fop2}
\ddot{r} = (n-1)r^{2n-3}. 
\end{equation}
For $n=2$, the solution of \eqref{profile} with $r(0)=1$ and $r'(0) = 0$ is $r(z)=\cosh(z)$ and is in particular defined for all $z$.  However, for $n>2$, the solution of \eqref{profile} with the same initial data is defined in an interval $(-T, T)$, where
\[ T = \int_1^\infty \!\frac{dz}{(z^{2n-2}-1)^\frac{1}{2}}.\]

For such a profile function $r(z)$ we call $\gamma(z)  = (z, r(z))$ the associated \emph{profile curve}. 

The critical catenoid is a scaling of the catenoid $\sqrt{x^2+y^2} = \cosh z$ that satisfies the free boundary condition.  Specifically, its profile curve is given by
\begin{align*}
\gamma(z) = (z, r(z) ) = (z, \frac{1}{\tau} \cosh(\tau z))
\end{align*}
where $\tau = \sigma \cosh(\sigma)$ and $\sigma$ is the positive solution to the equation $z = \coth z$.

%Recalling the notation of Lemma \ref{technical lemma} and using Equation \eqref{eqn: equality condition}, we have that for $i=1,2$ the relation
%\begin{align*}
%\tan \theta_i &= \frac{\cosh( z_i -c)}{z_i}\\
%&= \sinh(z_i-c).
%\end{align*}
%Differentiating this equation with respect to $c$ and using Lemma \ref{technical lemma} part (2), one has 
%\begin{align*}
%\frac{d\theta_i}{dc} &= -\cos^2(\theta_i) \cosh(z_i-c)\tanh^2(z_i-c)\\
%&<0.
%\end{align*}
%Equation \eqref{equal angles} is clearly satisfied when $c=0$; by using the above equation, we see that $\theta_1+\theta_2< \pi$ for $c>0$, and so Equation \eqref{equal angles} is not satisfied for any $c>0$.  Hence the only profile curve which may correspond to a free boundary catenoid occurs when $c=0$; by the uniqueness in Lemma \ref{technical lemma}, this concludes the proof of Theorem \ref{ theorem}.  

%\end{proof}

%%%%%%%%%%%%%%%%%%%%%%%%%%%%%%%%%%%%%%
% SECTION 3: UNiQUENESS OF N-CATENOID
%%%%%%%%%%%%%%%%%%%%%%%%%%%%%%%%%%%%%%

\section{Proof of theorem \ref{thm: catenoid}} 
\label{main proof}

Suppose $\Sigma^n \subset B^{n+1}(1)$ is a free boundary catenoid with axis of rotation $\ell_{\Sigma}$.  After possibly applying a rotation, we may suppose that $\ell_\Sigma$ is parallel to the $x^{n+1}$-axis.

\begin{lemma}
\label{lemma: coaxial}
$0\in \ell_\Sigma$. 
\end{lemma}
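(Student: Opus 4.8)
The plan is to use a torque balancing argument. Let $\Sigma \subset B^{n+1}(1)$ be a free boundary $n$-catenoid with axis $\ell_\Sigma$ parallel to the $x^{n+1}$-axis; write $\ell_\Sigma = \{(a, x^{n+1}) : x^{n+1} \in \R\}$ for some fixed $a \in \R^n$, so that the claim $0 \in \ell_\Sigma$ amounts to showing $a = 0$. Since $\Sigma$ is rotationally symmetric about $\ell_\Sigma$, the only Killing fields of $\R^{n+1}$ worth testing against are the ones not tangential to the orbits: translations and rotations whose axes meet $\ell_\Sigma$. First I would observe that $\partial \Sigma$ consists of (one or two) round $(n-1)$-spheres lying in $\partial B^{n+1}(1)$, each centered on $\ell_\Sigma$ and lying in a hyperplane orthogonal to $\ell_\Sigma$; the free boundary condition $\langle X, \eta \rangle = 1$ pins these circles at the specific heights where the catenoid profile curve meets the unit circle in the half-plane model.

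The heart of the argument is to evaluate the balancing formula \eqref{balancing} for a rotational Killing field. Since $n$ may be larger than $3$, I would not literally use the cross-product torque from Section \ref{forces}, but rather its $(n+1)$-dimensional analogue: for a skew-symmetric matrix $A$, the Killing field $K_A(X) = AX$ satisfies $\int_{\partial\Sigma} K_A \cdot \eta = 0$. Choosing $A$ to generate a rotation in the $(x^i, x^{n+1})$-plane for each $i = 1, \dots, n$ and computing the boundary integral over a round sphere $\sigma_\pm = \{|X - (a, h_\pm)| = \rho_\pm\} \subset \{x^{n+1} = h_\pm\}$, I expect the integral to split into a piece proportional to $a^i$ (coming from the ``moment arm'' of the sphere's center away from the rotation axis) and a piece that vanishes by the symmetry of $\sigma_\pm$ about the $x^{n+1}$-axis direction within its hyperplane. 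Concretely, using \eqref{resultant torque}-style bookkeeping, the torque of $\sigma_\pm$ about the axis $\{x^i = 0 = x^{n+1}\}$ equals $a^i$ times the flux $\mathcal F(\sigma_\pm)$ through $\sigma_\pm$, plus a term that is the torque computed from the base point $(a, h_\pm)$ — and that latter term is zero because $\eta$ restricted to $\sigma_\pm$ is invariant under the rotations fixing $\ell_\Sigma$, forcing $\int_{\sigma_\pm}(X - (a,h_\pm)) \times_{i,n+1} \eta = 0$. Summing over the boundary components and using the flux balancing $\sum_\pm \mathcal F(\sigma_\pm) = 0$ does not immediately kill the $a^i$ terms, so instead I would apply the balancing formula componentwise on each boundary sphere's contribution: the total is $a^i \cdot \big(\sum_\pm \pm|\mathcal F(\sigma_\pm)|\big)$ or similar, and one checks this bracketed flux-type quantity is nonzero for a nonplanar catenoid (the flux of a genuine catenoid through a cross-sectional sphere is a nonzero multiple of the generator of $H_{n-1}$), whence $a^i = 0$ for every $i$.

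An alternative, perhaps cleaner route I would keep in reserve: use \emph{translational} balancing in the directions $x^1, \dots, x^n$ orthogonal to $\ell_\Sigma$. The Killing field $K = \partial_{x^i}$ gives $\int_{\partial\Sigma} \langle \partial_{x^i}, \eta\rangle = 0$. On each boundary sphere $\sigma_\pm$, the conormal $\eta$ is (by the free boundary condition) parallel to the position vector $X$, so $\langle \partial_{x^i}, \eta\rangle$ is proportional to $x^i$ on $\sigma_\pm$, and integrating $x^i$ over the round sphere $\sigma_\pm$ centered at $(a, h_\pm)$ returns $a^i$ times (area of $\sigma_\pm$); the orthogonal-intersection condition further says $\eta = X$ exactly on $\partial B^{n+1}(1)$, tightening the proportionality constant. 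Summing over boundary components gives $a^i \cdot (\text{sum of sphere areas}) = 0$, hence $a^i = 0$. The main obstacle — and the step I would spend the most care on — is handling the two logically distinct cases: whether $\Sigma$ has one or two boundary components, and ruling out cancellation. If $\partial\Sigma$ has two components at heights $h_- < h_+$, I must confirm the relevant weights (areas, or fluxes) enter with the \emph{same} sign so they cannot cancel; this is where I would invoke the explicit structure of the catenoid profile curve from Section \ref{cat properties} and the fact that $\eta$ points outward along $\partial B^{n+1}(1)$ with $\langle X, \eta\rangle = 1 > 0$ on both components, guaranteeing no sign flip. A secondary subtlety is making sure the profile curve actually reaches $\partial B^{n+1}(1)$ in a way consistent with the surface being a compact piece of a (possibly rescaled) catenoid with both ends — but this is forced by $\Sigma \subset B^{n+1}(1)$ being a free boundary surface to begin with.
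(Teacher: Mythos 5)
Your first route is the same strategy as the paper's proof (rotational torque balancing), but two steps do not close as written. The mechanism that actually kills the torque of a boundary component about the origin is the \emph{pointwise} identity $K_A\cdot \eta = \langle AX, X\rangle = 0$ on $\partial B^{n+1}(1)$: the free boundary condition gives $\eta = X$ there, and $AX \perp X$ for skew-symmetric $A$. This yields $\T(\sigma)=0$ for \emph{each} boundary component separately, which is what the argument needs. The global balancing formula \eqref{balancing} cannot be ``applied componentwise'' --- as you yourself observe, summing over components only yields $a^i\bigl(\F(\sigma_+)+\F(\sigma_-)\bigr)=0$ with a vanishing bracket, i.e.\ $0=0$ --- and your proposed substitute is not a valid operation. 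You state the needed ingredient ($\eta = X$ on the sphere) in your second route but never deploy it where it is required.

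More seriously, both routes rest on the structural claim in your opening paragraph that each component of $\partial\Sigma$ is a round $(n-1)$-sphere centered on $\ell_\Sigma$ and lying in a hyperplane orthogonal to $\ell_\Sigma$. That claim already contains the lemma: such a sphere $\{(a+\rho\omega,h):\omega\in S^{n-1}\}$ lies in $\partial B^{n+1}(1)$ only if $|a|^2+2\rho\langle a,\omega\rangle+\rho^2+h^2=1$ for every $\omega$, which forces $a=0$. Consequently your ``cleaner'' translational route is circular (the balancing integral recovers only what the setup assumed), and in the first route the step ``the torque about the base point $(a,h_\pm)$ vanishes because $\eta|_{\sigma_\pm}$ is invariant under rotations fixing $\ell_\Sigma$'' presupposes that $\sigma_\pm$ is a full rotation orbit --- precisely what fails when $0\notin\ell_\Sigma$. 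The paper avoids this by never evaluating $\F$ or $\T_W$ on the boundary at all: it computes them on the waist sphere $\sigma_0$, which lies in the interior of the catenoid and genuinely is a rotation orbit, and transfers the result to $\sigma$ using the homology invariance of flux and torque. That transfer to the waist is the one idea your proposal is missing; with it, the unjustified description of $\partial\Sigma$ becomes unnecessary and the argument closes.
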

\begin{proof}
We first consider the case when $n=2$.  Let $\{e_1, e_2, e_3\}$ be the standard orthonormal frame on $\R^3$.  $\partial \Sigma$ consists of two circles in planes parallel to $\{z=0\}$; let $\sigma$ be the component with higher $z$-coordinate.  
Recalling \eqref{torque def}, we have
\begin{align*}
\T(\sigma) &=  \left(\int_\sigma \! K_{e_1} \cdot \eta, \int_\sigma \! K_{e_2}\cdot  \eta, \int_\sigma \!K_{e_3}\cdot  \eta\right)\\
&=(0, 0, 0)
\end{align*}
since $K_{e_i}$ is tangent to the sphere and the free boundary condition implies $K_{e_i}\cdot \eta = 0$ for $i=1, 2,3$. 

Let $\sigma_0$ be the ``waist'' circle of $\Sigma$ oriented so that $\eta = (0, 0, 1)$.  Since $\sigma$ is homologous to $\sigma_0$, it follows that
\[\F(\sigma) = \F(\sigma_0) = (0, 0, 2\pi \tau)\]
where $\tau$ is the radius of the circle $\sigma_0$.  Let $W\in \text{span}\{e_1, e_2\}$ be in $\ell_\Sigma$.  Since $\sigma$ is homologous to $\sigma_0$ and $\T_W$ is a homology invariant, we similarly compute
\begin{align*}
\T_W(\sigma) &= \T_W(\sigma_0)\\
&= \left(\int_{\sigma_0} \!K_{e_1, W} \cdot \eta, \int_{\sigma_0}\! K_{e_2, W}\cdot  \eta, \int_{\sigma_0} \!K_{e_3, W}\cdot  \eta\right)\\
&= (0,0,0)
\end{align*}
since  $K_{e_3, W}\cdot \eta = 0$ pointwise on $\sigma_0$ and the the latter two integrals vanish by symmetry.  

On the other hand, Equation \eqref{resultant torque} implies 
\begin{align*}
\T_W(\sigma) &= \T(\sigma) - W\times \F(\sigma).
\end{align*}
This together with the fact that $\F(\sigma)$ is a vertical vector implies that $W$ is also a vertical vector, which proves the lemma when $n=2$.  

When $n>2$, consider a three dimensional subspace $U$ spanned by $e_1: = W$, $e_3: =\ell_\Sigma$ and an arbitrary unit vector $e_2$ mutually orthogonal to $e_1$ and $e_3$.  Then defining $\T(\sigma)$ and $\T_W(\sigma)$ by the same formula as before where $X$ and $\eta$ this time denote the projections of the corresponding vectors to $U$, the proof follows in an analogous way.  
\end{proof}

%When $n>2$, the profile curve (see \eqref{general profile curve} and \eqref{profile eqns} below) admits a less explicit expression than when $n=2$ (recall \eqref{profile curve}) and for this reason we use a different proof technique.  As in Lemma \ref{technical lemma}, we will show for each $c\geq 0$ there are unique ``candidate'' values $z_1$ and $z_2$ where $\gamma$ and $\gamma'$ lie on the same line.  However, in this case, we will show by estimating the equation \eqref{fop}  that $|\gamma(z_1)|>|\gamma(z_2)|$ when $c>0$ and so the only free boundary solution occurs when $c=0$.  

 %\begin{remark}
 %Although we will not use it below, reasoning analogous to the force balancing argument in the proof of Theorem \ref{main theorem} that leads to \eqref{equal angles} would lead to 
%\begin{align*}
%\sin^{n-1}\theta_1 \cos\theta_1+\sin^{n-1}\theta_2 \cos\theta_2= 0
%\end{align*}
%where $\theta_1$ and $\theta_2$ are oriented angles the boundary components make with $\ell_\Sigma$.
%\end{remark}
\begin{remark}
\label{rmk 1}
In light of Lemma \ref{lemma: coaxial}, after dilations and a possible reflection, we may restrict our considerations to translated profile curves of the form
\[ \gamma(z, c) = (z, r(z-c))\]
where $c\geq 0$ and $r(z)$ satisfies $r(0)=1$, $\dot{r}(0) = 0$, and equation \eqref{fop}.  For convenience of notation, we denote differentiation with respect to $c$ and $z$ by $'$ and $\cdot$, respectively.  The catenoid corresponding to $\gamma$ is a free boundary catenoid in $B^{n+1}(R)$ if for each value $z_i$ where $|\gamma(z_i, c)| = R$, $\gamma(z_i, c)$ and $\dot{\gamma}(z_i,c)$ lie along the same line. 
\end{remark}

\begin{lemma}\label{general technical lemma}
Suppose $n\geq 2$.  There are two real values $z_1(c)<0<z_2(c)$ such that $\gamma(z)$ and $\dot{\gamma}(z)$ point along the same line.  Moreover, for $i=1, 2$ we have
\begin{enumerate}
\item $\frac{d}{dc} z_i(c)= 1-\frac{\dot{r}^2}{(n-1)(1+\dot{r}^2)}>0$.
\item $\frac{d}{dc} (z_i-c)=-\frac{\dot{r}^2}{(n-1)(1+\dot{r}^2)}<0$.
\end{enumerate}
\end{lemma}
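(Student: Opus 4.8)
The plan is to characterize the values $z_i(c)$ by a single equation in $z$ and $c$, and then differentiate implicitly. The collinearity condition ``$\gamma(z,c)$ and $\dot\gamma(z,c)$ point along the same line'' says that the $2\times 2$ determinant of the vectors $(z, r(z-c))$ and $(1, \dot r(z-c))$ vanishes, i.e.
\begin{equation}
\label{eqn: collinear}
F(z,c) := z\,\dot r(z-c) - r(z-c) = 0.
\end{equation}
First I would establish existence and the sign claims $z_1(c)<0<z_2(c)$. Writing $w=z-c$ and using $r(0)=1$, $\dot r(0)=0$, $r$ even, and $\ddot r>0$ (from \eqref{fop2}, since $r\geq 1$), one checks that $g(z):=F(z,c)=z\dot r(z-c)-r(z-c)$ satisfies $g(c)=-r(0)=-1<0$, and that $g\to+\infty$ as $z$ increases to the right endpoint of the domain (where $r\to\infty$ faster than linearly, and $\dot r>0$), while $g\to +\infty$ as $z$ decreases to the left endpoint (there $\dot r<0$ and $z\dot r\to+\infty$). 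Monotonicity of $g$ on each side of $z=c$ — which follows from computing $\partial_z F = \dot r + z\ddot r - \dot r = z\ddot r$, so $F$ is increasing in $z$ for $z>0$ and decreasing for $z<0$ — then gives exactly one root $z_2(c)$ with $z_2>c\geq 0$ — wait, one must be slightly careful: $\partial_z F = z\ddot r$ vanishes at $z=0$, not $z=c$, so I would instead argue that $F(0,c) = -r(-c)<0$ and $F$ is strictly increasing for $z>0$, strictly decreasing for $z<0$, yielding roots $z_2(c)>0$ and $z_1(c)<0$; one then checks $z_1<0<z_2$ as claimed and that these are the only two such values.

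Next, with $z_i(c)$ implicitly defined by $F(z_i(c),c)=0$, I apply the implicit function theorem: $z_i'(c) = -F_c/F_z$ evaluated at $(z_i(c),c)$. We have $F_z = z\ddot r(z-c)$ and $F_c = -z\ddot r(z-c) + \dot r(z-c) = -F_z + \dot r$. Hence
\begin{equation}
\label{eqn: zprime}
z_i'(c) = -\frac{F_c}{F_z} = 1 - \frac{\dot r(z_i-c)}{z_i\,\ddot r(z_i-c)}.
\end{equation}
Now I use the ODE to eliminate $\ddot r$ and $z_i$: at a collinearity point \eqref{eqn: collinear} gives $z_i = r/\dot r$ (valid since $\dot r\neq 0$ there, because if $\dot r=0$ then \eqref{eqn: collinear} forces $r=0$, impossible), so $z_i\ddot r = r\ddot r/\dot r$, and \eqref{profile} gives $r\ddot r = (n-1)(1+\dot r^2)$. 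Substituting into \eqref{eqn: zprime} yields
\begin{equation}
\label{eqn: final}
z_i'(c) = 1 - \frac{\dot r^2}{(n-1)(1+\dot r^2)},
\end{equation}
which is part (1); part (2) is immediate since $\frac{d}{dc}(z_i-c) = z_i'(c)-1 = -\frac{\dot r^2}{(n-1)(1+\dot r^2)}$. Positivity in (1) and negativity in (2) follow from $\frac{\dot r^2}{(n-1)(1+\dot r^2)} < \frac{1+\dot r^2}{(n-1)(1+\dot r^2)} = \frac{1}{n-1}\leq 1$ for $n\geq 2$, with equality only when $n=2$ and $\dot r\to\infty$, which is not attained on the open domain.

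The main obstacle is not the differentiation — that is routine — but the bookkeeping around existence, uniqueness, and the sign/location of $z_1,z_2$, together with verifying that $F_z \neq 0$ at the roots so that the implicit function theorem applies. The delicate point is that $F_z = z\ddot r$ vanishes at $z=0$, so I must confirm that neither root lies at $z=0$; this is clear because $F(0,c) = -r(-c) \neq 0$. I would also need to handle the case $c=0$ (where by symmetry $z_1 = -z_2$) and confirm the domain of $z$ is an interval symmetric about $z=c$ on which $r$ blows up at both ends for $n>2$ (and is all of $\R$ for $n=2$), so that the intermediate value argument producing the two roots goes through uniformly in $c\geq 0$.
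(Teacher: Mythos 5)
Your proposal is correct and takes essentially the same route as the paper: both characterize $z_i(c)$ by the collinearity equation $z\,\dot r(z-c)=r(z-c)$, get existence and uniqueness of $z_1<0<z_2$ from a monotonicity computation, and then differentiate the defining equation implicitly, using \eqref{profile} (equivalently \eqref{fop} and \eqref{fop2}) to arrive at $1-\frac{\dot r^2}{(n-1)(1+\dot r^2)}$. The only cosmetic difference is that the paper works with two auxiliary functions $h_\pm(z)=z\mp r/(r^{2n-2}-1)^{1/2}$ on $z>c$ and $z<c$ respectively, rather than your single function $F(z,c)$.
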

\begin{proof}
First fix $c\geq 0.$  Using 
\begin{align*}
\gamma(z) = (z, r(z-c)) \quad \text{and} \quad \dot{\gamma}(z)  = (1, \dot{r}(z-c))
\end{align*}
we find that $\dot{\gamma}(z)$ and $\gamma(z)$ point along the same line if and only if
\[ z = \frac{r}{\dot{r}} = \frac{ r}{(r^{2n-2}-1)^{1/2}}.\]
For $z\geq c$, define $h_+(z) = z - \frac{r}{(r^{2n-2} -1)^{1/2}}$, and compute
\begin{align*}
\dot{h}_+(z) &= (n-1) \left( 1+ \frac{1}{\dot{r}^2}\right). 
\end{align*}
Hence $h_+(z)$ is monotonically increasing where it is smooth.  It is clear that $h_+(z)> 0$ as $c\nearrow T$ and
\[ \lim_{z \searrow c} h_+(z) = -\infty\]
so there is a unique solution $z_2(c)> 0$ satisfying $h_+(z_2) = 0$.
Similarly, for $z<c$, define $h_-(z) =  z+ \frac{ r}{(r^{2n-2}-1)^{1/2}}.$
Using analogous arguments, we see there is a unique $z_1(c)< 0$ such that $h_-(z_1) = 0$.  

Now differentiate the equation $z_i(c)\dot{r}(z_i-c) = r(z_i-c)$ implicitly with respect to $c$ and use \eqref{fop} and \eqref{fop2} to conclude
\begin{align*}
\frac{d}{dc} z_i(c)  = \frac{1}{n-1}\left(n-2 - \frac{1}{r^{2n-2}}\right) = 1-\frac{\dot{r}^2}{(n-1)(1+\dot{r}^2)}>0
\end{align*}
which proves (1).  (2) then follows easily since 
\begin{align*}
\frac{d}{dc}(z_i - c)  = -\frac{\dot{r}^2}{(n-1)(1+\dot{r}^2)}<0.
\end{align*}  
\end{proof}

%Since $\gamma(z)$ is only defined for $z\in (-T, T)$ for some $T<\infty$, it is clear that $(z, \gamma(z-c))$ cannot be a free boundary curve for $c> T$. 

%We require one more technical ingredient before proving Theorem \ref{main theorem}.  
\begin{lemma}\label{first derivative positive estimate}
The positive solution of the equation $z_0 = \frac{r(z_0)}{\dot{r}(z_0)}$ satisfies $r(z_0)\geq n^\frac{1}{2n-2}$.
\end{lemma}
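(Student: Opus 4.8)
The plan is to locate the zero $z_0$ by comparing it with the point $z^\ast$ at which the profile function $r$ first attains the value $n^{1/(2n-2)}$, leveraging the monotonicity already established in Lemma~\ref{general technical lemma}. First I would recall from the proof of that lemma (with $c=0$, so that $z_0$ is exactly the time $z_2(0)$ there) that $h_+(z):=z-r(z)/\dot r(z)=z-r\,(r^{2n-2}-1)^{-1/2}$ is strictly increasing on $(0,T)$ with $h_+(z)\to-\infty$ as $z\searrow 0$, so $z_0$ is its unique positive zero; and that, by \eqref{fop}, $\dot r=(r^{2n-2}-1)^{1/2}>0$ on $(0,T)$, so $r$ increases strictly there from $1$ to $\infty$. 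Consequently $z^\ast\in(0,T)$ is well defined, the target inequality $r(z_0)\ge n^{1/(2n-2)}$ is equivalent to $z_0\ge z^\ast$, and, since $h_+$ is increasing with $h_+(z_0)=0$, this is in turn equivalent to $h_+(z^\ast)\le 0$.

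Then I would estimate $h_+(z^\ast)$. Using $r(z^\ast)^{2n-2}=n$ gives $h_+(z^\ast)=z^\ast-n^{1/(2n-2)}(n-1)^{-1/2}$, so, since $n^{1/(2n-2)}\ge 1$, it suffices to prove the stronger bound $z^\ast\le(n-1)^{-1/2}$. Changing variables from $z$ to $r$ via $dz=dr/(r^{2n-2}-1)^{1/2}$ gives
\[
z^\ast=\int_1^{n^{1/(2n-2)}}\frac{dr}{(r^{2n-2}-1)^{1/2}},
\]
and the substitution $s=r^{2n-2}$ rewrites this as $\tfrac{1}{2n-2}\int_1^{n}s^{\frac{1}{2n-2}-1}(s-1)^{-1/2}\,ds$. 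For $n\ge 2$ the exponent $\tfrac{1}{2n-2}-1$ is negative, so $s^{\frac{1}{2n-2}-1}\le 1$ on $[1,n]$ and the integral is at most $\tfrac{1}{2n-2}\int_1^{n}(s-1)^{-1/2}\,ds=(n-1)^{-1/2}$. Hence $h_+(z^\ast)\le (1-n^{1/(2n-2)})(n-1)^{-1/2}\le 0$, so $z_0\ge z^\ast$ and therefore $r(z_0)\ge n^{1/(2n-2)}$.

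I do not expect a genuine obstacle here; the only two points needing care will be that $z^\ast$ is well defined (which follows from $r\to\infty$ as $z\to T$) and the elementary integral inequality, whose whole point is that after the substitution the power of $s$ has a negative exponent and so only works in our favour. The real content of the argument is the reduction — via the monotonicity of $h_+$ — of the geometric statement to this one-variable estimate.
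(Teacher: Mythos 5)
Your proof is correct, and while it shares the paper's overall strategy (use monotonicity to reduce the claim to a sign check at the point where $r$ first reaches $n^{1/(2n-2)}$), the execution is genuinely different and in some ways cleaner. The paper passes to the Fakhi--Pacard parametrization $\phi(t)=\cosh((n-1)t)^{1/(n-1)}$, $\psi(t)=\int_0^t\phi^{2-n}$, restricts the resulting estimate to $n\ge 3$ (using $\cosh((n-1)s)^{(2-n)/(n-1)}\le 1$ and an explicit formula for the value $v$ with $\cosh((n-1)v)=\sqrt{n}$), leaves a final numerical comparison as ``easy to check,'' and handles $n=2$ by a separate explicit computation with $\cosh z$. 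You instead stay in the $(z,r)$ variables, write $z^\ast=\int_1^{n^{1/(2n-2)}}(r^{2n-2}-1)^{-1/2}\,dr$ directly from \eqref{fop}, and the substitution $s=r^{2n-2}$ turns the needed bound into the transparent inequality $\tfrac{1}{2n-2}\int_1^n s^{\frac{1}{2n-2}-1}(s-1)^{-1/2}\,ds\le(n-1)^{-1/2}$, valid uniformly for all $n\ge 2$; combined with $\dot r(z^\ast)=(n-1)^{1/2}$ this gives $h_+(z^\ast)\le(1-n^{1/(2n-2)})(n-1)^{-1/2}\le 0$ with no case split and no unverified numerical step. (Your substitution is in fact the inverse of the paper's, since $r=\cosh((n-1)t)^{1/(n-1)}$ transforms $dr/(r^{2n-2}-1)^{1/2}$ into $\phi^{2-n}\,dt$, but your estimate of the resulting integral is sharper where it matters.) The only ingredients you import --- strict monotonicity of $h_+$ and of $r$, and the identification $z_0=z_2(0)$ --- are all established in Lemma \ref{general technical lemma} and equation \eqref{fop}, so the argument is complete.
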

\begin{proof}
 An explicit calculation using $\gamma = (z, \cosh z)$ shows the conclusion holds when $n=2$.  For $n\geq 3$, we introduce a change of parameters which appears in \cite{Pacard 1} which will enable a somewhat explicit estimate. 
We use the new parameter $t$ where
\begin{align}
\label{general profile curve}
(z, r(z)) = ( \psi(t), \phi(t) )
\end{align} 
where
\begin{align}
\label{profile eqns}
\phi(t) = \cosh( (n-1) t)^\frac{1}{n-1} \quad \text{and} \quad \psi(t) = \int_{0}^{t} \!\phi^{2-n}(s) ds.
\end{align}
The condition that $\gamma(t)$ and $\gamma'(t)$ point along the same line is equivalent to 
\begin{equation}
\label{derivative estimate}
  \sinh((n-1)t) \int_{0}^{t}\! \cosh( (n-1)s )^\frac{2-n}{n-1} ds  = \cosh( (n-1)t)^\frac{1}{n-1}.
\end{equation}
Let $t_0$ denote the unique positive value of $t$ where the above equation is satisfied.  

There is a unique $v> 0$ such that $\cosh( (n-1) v) = \sqrt{n}$, and by direct calculation
\[ v= \frac{\cosh^{-1}(\sqrt{n})}{n-1} = \frac{1}{2(n-1)} \log( \sqrt{n}+\sqrt{n-1}).\]
Since the left hand side of Equation \eqref{derivative estimate} is a monotone increasing function of $t$ and eventually dominates the right hand side, the lemma will follow if we can show 
\begin{align}
\label{claim 1}
\sinh((n-1)v) \int_{0}^{v}\! \cosh( (n-1) s)^\frac{2-n}{n-1} ds < n^\frac{1}{2n-2},
\end{align}
for then at $t_0$ we will have 
\[ r(z_0) = \phi(t_0) = \cosh((n-1)t)^\frac{1}{n-1} > n^\frac{1}{2n-2}.\]
For $n\geq 3$, we have $n-2< 0$, so $\cosh( (n-1) t)^\frac{n-2}{n-1}< 1$.  Hence 
\begin{align*}
\sinh((n-1)v) \int_{0}^{v} \!\cosh( (n-1) s)^\frac{2-n}{n-1} ds
&< \sqrt{n-1}  \frac{1}{2(n-1)} \log( \sqrt{n}+\sqrt{n-1})\\
&< \frac{1}{2\sqrt{n-1}} \log(2\sqrt{n}).
\end{align*}
It is easy to check that this is less than $n^\frac{1}{ 2n-2}$ for $n\geq 3$.  This verifies \eqref{claim 1}. 
\end{proof}

\begin{proof}[\textbf{Proof of Theorem \ref{thm: catenoid}}]
 By Lemma \ref{lemma: coaxial} and Remark \ref{rmk 1}, we may suppose there is a $c\geq 0$ such that $\Sigma$ has the profile curve \[ \gamma(z, c) = (z, r(z-c)).\]
Define $f_i(c) =| \gamma(z_i(c))|^2$.  By Lemma \ref{general technical lemma} and Remark \ref{rmk 1}, Theorem \ref{thm: catenoid} will follow if we can show 
\begin{align}
\label{eqn: main claim}
f_1(c)>f_2(c) \quad  \text{for}\quad c>0.   
\end{align}
We will prove \eqref{eqn: main claim} by a maximum principle argument.  
 After computation using \eqref{fop}, \eqref{fop2}, and Lemma \ref{general technical lemma},
\begin{align*}
\frac{1}{2} f'_i(c) &= z_i z'_i + r(z_i -c) \dot{r}(z_i -c)(z_i-c)'\\
&= \frac{z_i}{n-1}\left( n - r^{2n-2}\right).
\end{align*}
By Lemma \ref{first derivative positive estimate}, $f_1'(0)> 0$ and $f_2'(0)< 0$. 
We compute the second derivatives in the same way: 
\begin{align*}
\frac{1}{2}f^{\prime \prime}_i(c) &= \frac{z'_i}{n-1}(n-r^{2n-2}) + \frac{z_i}{n-1}\left( -(2n-2)r^{2n-3} \dot{r}\frac{d}{dc}(z_i-c)\right)\\
&= \frac{1}{n-1}\left( (2n-2)r^{2n-2} + z_i'( n-(2n-1))r^{2n-2}\right)
\end{align*}
and after further calculation,
\begin{align*}
\frac{n-1}{2}f^{\prime \prime}_i(c) = n-3-\frac{2}{n-1}+\frac{n}{n-1}\left(r^{2n-2}(z_i-c)+\frac{1}{r^{2n-2}(z_i-c)}\right).
\end{align*}

%\begin{eqnarray*}
%\frac{1}{2} f^{\prime \prime}_i(c) &=& \frac{1}{n-1} \left ( n-2 + \frac{1}{r^{2n-2}(z_i-c)}+\frac{n r^{2n-2}(z_i-c)-1}{n-1}\left( 1-\frac{1}{r^{2n-2}(z_i-c)}\right)\right)\\
%&=& n-3-\frac{2}{n-1} + \left(2+\frac{1}{n-1}\right)\frac{1}{r^{2n-2}(z_i-c)}+ \frac{n}{n-1} r^{2n-2}(z_i-c).\\
%\end{eqnarray*} 
Observe that $f^{\prime \prime}_1(0) = f^{\prime \prime}_2(0)$.  We compute the difference of third derivatives as follows:
\begin{align*}
\frac{n-1}{2}\left(f^{(3)}_1(c)-f^{(3)}_2(c) \right) &= \frac{n}{n-1}\left( r^{2n-2}(z_1-c) - r^{2n-2}(z_2-c)\right)'\\
& - \frac{n}{n-1}\left(\frac{1}{r^{2n-2}(z_2-c)}-\frac{1}{r^{2n-2}(z_1-c)}\right)'
\end{align*}

%\begin{eqnarray*}
%f^{(3)}_1(c)-f^{(3)}_2(c) &=&\\
%&&\frac{n}{n-1}\left( r^{2n-2}(z_1-c) -r^{2n-2}(z_2-c) \right)' \\
%&-& \left(2+\frac{1}{n-1}\right)\left(\frac{1}{r^{2n-2}(z_2-c)}-\frac{1}{r^{2n-2}(z_1-c)}\right)'.\\ 
%\end{eqnarray*}
Using Lemma \ref{first derivative positive estimate}, we estimate the first group of terms as follows:
\begin{eqnarray*}
\frac{n}{n-1}\left( r^{2n-2}(z_1-c) -r^{2n-2}(z_2-c) \right)' &\geq& \frac{n}{n-1}(2n-2) r^{2n-3}(z_1-c) \dot{r}(z_1-c) \left(1-\frac{1}{r^{2n-2}(z_1-c)}\right)\\
&\geq& \frac{n}{n-1}2(n-1)n^{\frac{2n-3}{2n-2}} \frac{(n-1)^{3/2}}{n}.\\
&=& 2n^\frac{2n-3}{2n-2}(n-1)^{3/2}.
\end{eqnarray*}
For the second group of terms, we estimate
\begin{eqnarray*}
\frac{n}{n-1}\left(\frac{1}{r^{2n-2}(z_2-c)}-\frac{1}{r^{2n-2}(z_1-c)}\right)'=\\
\frac{2n}{n-1}\left(\frac{1}{r^{2n-1}(z_2-c)}\frac{\dot{r}(z_2-c)^2}{\dot{r}(z_2-c)^2+1} -  \frac{1}{r^{2n-1}(z_1-c)}\frac{\dot{r}(z_1-c)^2}{\dot{r}(z_1-c)^2+1} \right)\\
 \leq 3. 
 % \geq \frac{n}{n-1}2\left(1+\frac{1}{n^\frac{2n-1}{2n-2}}\right).
\end{eqnarray*}
From these estimates, it easily follows that $f^{(3)}_1(c)-f^{(3)}_2(c) \geq  0$ for all $n\geq 2$.  This implies $f^{\prime \prime}_1(c) \geq f^{\prime \prime}_2(c)$ for all $c\geq 0$; since $f'_1(0)> 0$ and $f'_2(0)<0$, it follows that $f_1(c)>f_2(c)$ for $c>0$.  Hence \eqref{eqn: main claim} holds and Theorem \ref{thm: catenoid} follows.  
\end{proof}

\begin{remark}
\label{rmk: crit cat}
In the case where $\Sigma^2 \subset B^3(1)$ is a free boundary catenoid, we can give another proof of Lemma \ref{lemma: coaxial}.  It follows from
\begin{lemma}
Suppose $\Sigma^2\subset B^3(1)$ is a free boundary minimal surface.  Each component of $\partial \Sigma$ is a line of curvature of $\Sigma$.
\end{lemma}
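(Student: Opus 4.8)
The plan is to prove this via the classical mechanism behind \emph{Joachimsthal's theorem}: since the round sphere $\partial B^3(1)$ is totally umbilic, every curve lying on it is a line of curvature of it, and a curve along which two surfaces meet at a constant angle is a line of curvature of one surface as soon as it is a line of curvature of the other. Here the constant angle is $\pi/2$, so the conclusion should reduce to a short direct computation; in fact minimality of $\Sigma$ plays no role, only the orthogonal intersection with $\partial B^3(1)$.

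Concretely, first I would fix a component $\sigma$ of $\partial\Sigma$, parametrize it by Euclidean arclength as $\gamma(s)$, and choose a smooth unit normal field $N$ of $\Sigma$ along $\sigma$ (the assertion is local, so this is harmless). Let $X$ be the position vector field on $\R^3$. The free boundary condition, together with $|X| = 1$ on $\partial B^3(1)$ and $\langle X, \eta\rangle = 1$ (see the discussion following Definition~\ref{def: fbc}), forces $\eta = X$ along $\sigma$, where $\eta$ is the outward unit conormal of $\Sigma$. In particular $X$ is tangent to $\Sigma$ and orthogonal to $\gamma'$ along $\sigma$, and $\langle N, X\rangle \equiv 0$ there. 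Hence $\{\gamma'(s),\, X(\gamma(s))\}$ is an orthonormal basis of $T_{\gamma(s)}\Sigma$, and $\gamma'$ is a principal direction of $\Sigma$ at each point of $\sigma$ exactly when the off-diagonal entry of the shape operator in this basis vanishes, i.e.\ when $\langle \nabla_{\gamma'} N,\, X\rangle = 0$, where $\nabla$ denotes the Euclidean connection.

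Then the computation is immediate:
\[
\langle \nabla_{\gamma'} N,\, X\rangle \;=\; \frac{d}{ds}\langle N, X\rangle \;-\; \langle N,\, \nabla_{\gamma'} X\rangle \;=\; 0 - \langle N,\, \gamma'\rangle \;=\; 0 ,
\]
using that $\langle N, X\rangle$ vanishes identically along $\sigma$, that $\nabla_{\gamma'} X = \gamma'$ because $X$ is the position vector field, and that $N \perp \gamma'$. Therefore $\sigma$ is a line of curvature of $\Sigma$. There is no genuine obstacle here; the only point that needs care is the preliminary observation that the free boundary condition makes the position vector tangent to $\Sigma$ along $\partial\Sigma$, which is precisely what turns $\{\gamma', X\}$ into a tangent frame and reduces the lemma to the one-line identity above.
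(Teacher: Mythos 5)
Your proof is correct and is essentially the same as the paper's: both identify the position vector as tangent to $\Sigma$ along $\partial\Sigma$ via the free boundary condition, take $\{\gamma', X\}$ as an orthonormal tangent frame, and show the off-diagonal entry of the shape operator vanishes by differentiating $\langle N, X\rangle = 0$ along the boundary. The framing via Joachimsthal's theorem and the remark that minimality is not needed are accurate but do not change the argument.
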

\begin{proof}
Let $\gamma(t)$ be a local unit speed parametrization of a component of $\partial \Sigma$  and let $n$ be a local unit normal field on $\Sigma$.  By the free boundary condition it is possible to orient $\gamma$ such that for each $t\in I$, $\{\gamma, \dot{\gamma}, n\}$ is an orthonormal frame for $T_{\gamma(t)}\R^3$ and $\{\gamma, \dot{\gamma}\}$ is an orthonormal frame for $T_{\gamma(t)} \Sigma$.  Then compute
\begin{align*}
0 =\dot{\gamma}\langle n, \gamma\rangle &= \langle \nabla_{\dot{\gamma}} n, \gamma\rangle + \langle n, \nabla_{\dot{\gamma}}\gamma\rangle\\
&= \langle \nabla_{\dot{\gamma}} n, \gamma\rangle+\langle n , \dot{\gamma}\rangle\\
&= \langle \nabla_{\dot{\gamma}} n, \gamma\rangle.
\end{align*}
Since $\Sigma$ is a surface, this implies $\dot{\gamma}$ is an eigenvector of the Weingarten map, which proves the Lemma.  
\end{proof}
Since the compact lines of curvature on any catenoid $\Sigma\subset \R^3$ are circles centered on $\ell(\Sigma)$, elementary geometry implies $0\in \ell(\Sigma)$.  
\end{remark}

%\begin{lemma}
%\label{lemma: annulus}
%Suppose $\Sigma \subset B^3(1)$ is a free boundary annulus.  Then 
%\begin{align}
%\label{eqn: annulus}
%\int_{\Sigma}|A|^2 = 4\pi |\Sigma|.
%\end{align}
%\end{lemma}
%\begin{proof}
%By the Gauss Bonnett Theorem, we have
%\begin{align*}
%\int_{\Sigma} K + \int_{\partial \Sigma} \kappa_g = 2\pi \chi(\Sigma).
%\end{align*}
%Since $\Sigma$ is minimal, $K = -\frac{|A|^2}{2}$, and since $\chi(\Sigma) = 0$ as $\Sigma$ is an annulus, we find
%\begin{align*}
%\int_\Sigma |A|^2 = - \int_{\partial \Sigma} \kappa_g.
%\end{align*}
%The free boundary condition implies $\kappa_g \equiv 1$  and $2|\Sigma| = |\partial \Sigma|$, (e.g. \cite{FS1}, Theorems 5.3, 5.4). 
%\end{proof}

\end{document}